\newtheorem{proposition}{Proposition}
\theoremstyle{remark}
\newtheorem{remark}{Remark}
\renewcommand{\S}{\mathcal{S}}
\newcommand{\D}{\mathcal{D}}
\newcommand{\lole}{\ensuremath{\mathrm{LOLE}}}
\newcommand{\eeu}{\ensuremath{\mathrm{EEU}}}
\newcommand{\cone}{\ensuremath{\mathrm{CONE}}}
\newcommand{\voll}{\ensuremath{\mathrm{VOLL}}}
\newcommand{\ctp}{\textit{capacity to procure}}
\newcommand{\mmx}[1]{\ensuremath{#1^*}}
\newcommand{\lwr}[1]{\ensuremath{\hat{#1}}}
\title{Least worst regret analysis for decision making under
  uncertainty,\\ with applications to future energy scenarios}
\author{Stan Zachary\footnote{Research
    supported by EPSRC grant EP/I017054/1.}\\[1 ex]
  Heriot-Watt University}
\date{\today}
\begin{document}

\maketitle

\begin{abstract}

  Least worst regret (and sometimes minimax) analysis are often used
  for decision making whenever it is difficult, or inappropriate, to
  attach probabilities to possible future scenarios.  We show that,
  for each of these two approaches and subject only to the convexity
  of the cost functions involved, it is always the case that there
  exist two ``extreme'' scenarios whose costs determine the outcome of
  the analysis in the sense we make clear.  The results of either
  analysis are therefore particularly sensitive to the cost functions
  associated with these two scenarios, while being largely unaffected
  by those associated with the remainder.  Great care is therefore
  required in applications to identify these scenarios and to consider
  their reasonableness.

  We also consider the relationship between the outcome of a least
  worst regret and a Bayesian analysis, particularly in the case where
  the regret functions associated with the scenarios largely
  differ from each other by shifts in their arguments, as is the case
  in many applications.

  We study in detail the problem of determining an appropriate level
  of electricity capacity procurement in Great Britain, where
  decisions must be made several years in advance, in spite of
  considerable uncertainty as to which of a number of future scenarios
  may occur, and where least worst regret analysis is currently used
  as the basis of decision making.

\end{abstract}

\section{Introduction}
\label{sec:introduction}

Many economic decisions, for example that of an appropriate level of
investment, need to be taken in the face of uncertainties about future
conditions.  Were these conditions known an optimal decision might be
made, usually on the basis of the minimisation of a cost function.
Indeed were it possible to assign probabilities to future
conditions---frequently condensed into a finite number of
\emph{scenarios}---one might reasonably minimise an expected cost, or
perhaps some other appropriate functional of the probability
distribution of cost (see, for example, Berger~\cite{Ber}).  However,
even this is frequently not possible, either because of insufficient
data or other information with which to make an appropriate assessment
of probabilities, or because future conditions are dependent upon
other human decisions---for example political decisions---yet to be
made, and it would be inappropriate to attempt to second-guess these.

A Bayesian decision maker would simply assign subjective probabilities
to future scenarios, using such information as \emph{was} available,
and then proceed as above.  However, there is often an understandable
reluctance to do this, and then recourse is made to one of various
economic decision making criteria which do not require the assignment
of probabilities, and which are therefore often described as
``robust''.  Whether this really makes sense is a matter of
philosophical debate: if one believes that the probabilities
matter---at least, for example, that highly likely future scenarios
should have more weight in the decision making process than highly
unlikely ones---then the use of techniques which entirely ignore
probabilities might simply be regarded as suboptimal.  However, the
continued use of such techniques is often justified on pragmatic
grounds.

In the present paper, we study two commonly used non-probabilistic
techniques.  In both cases associated with each future scenario to be
considered is a cost function defined on the set of possible
decisions.  In \emph{minimax} analysis that decision is made which
minimises the maximum cost to be incurred over all possible scenarios.
This technique, based on extreme risk aversion, has a long history;
for an early systematic treatment, see Savage~\cite{Sav}.  In
\emph{least worst regret} (or \emph{minimax regret}) analysis, the
cost function associated with each scenario is modified by subtracting
its minimum value over the decision set; the resulting functions are
termed the \emph{regret functions} associated with the scenarios, and
the minimax criterion is applied to the set of regret functions rather
than the original cost functions.  This technique was first introduced
in the early 1980s, independently in the papers by Loomes and
Sugden~\cite{LS} and by Bell~\cite{Bell} and in the book by
Fishburn~\cite{Fish}, and has been the subject of some subsequent
study, usually in the context of specific and often complex models.

More formal definitions of both these techniques are given in the next
section.  Least worst regret analysis in particular is much used in
practice. However, depending on the nature of the cost functions
involved, neither technique can be said to be always scientifically
rational.  \emph{Minimax} analysis suffers from the problem that a
single scenario with an associated cost function which is uniformly
high across the decision set (notably one which is pointwise higher
than the cost functions associated with all other scenarios) may alone
determine the outcome of the analysis, even though this scenario may
be unlikely and/or may have a relatively flat cost function---note
that either of the latter circumstances strongly suggests that such a
scenario should \emph{not} be influential in cost-based decision
making.  \emph{Least worst regret} analysis avoids this particular
problem by effectively adjusting the cost function associated with
each scenario so that its minimum value across the decision set is
zero (thus defining the regret function).  There are some
well-rehearsed arguments as to why this is to be preferred---see, for
example, the references cited above, and note also that a Bayesian
probabilistic analysis also depends on the cost functions only through
their associated regret functions, as noted more formally in the next
section.  However least worst regret analysis has the unfortunate
property that, depending on the cost functions involved, which of two
decisions is to be preferred may depend on the costs associated with a
third possible decision----or indeed on whether that third possible
decision is actually included in the decision set---a point to which
we return in more detail in Section~\ref{sec:least-worst-regret}.
Neither minimax analysis nor a Bayesian probabilistic analysis suffers
from this problem.

Thus we would argue that the \emph{uncritical} adoption of either of
the above techniques is to be avoided.  Whether either of them works
well in practice depends very much on the shape of the cost functions
involved, as well as on the specification of the decision set itself.
This is one issue we wish to explore in the present paper.  Our
particular interest is in least worst regret analysis, as the latter
seems to be presently widely used.  One such use is that of the
determination, in Great Britain, of an optimal level of provision of
``conventional'' electricity generation capacity; this is something
which must be decided several years in advance, even although there
are considerable uncertainties as to both future electricity demand
and the future availability of ``non-conventional'', i.e.\ renewable,
generation.  We study this example in
Section~\ref{sec:exampl-electr-capac}.

In Section~\ref{sec:least-worst-regret} we give more formal
definitions and study some relevant mathematical properties of both
minimax and least worst regret analysis.  Our main result is that in
either case, given the convexity of the cost functions, it is possible
to identify two ``extreme'' scenarios which essentially determine the
outcome of the analysis in a sense we make precise there---roughly
speaking the other scenarios do not matter at all provided none of
them becomes more extreme than the two above.  It follows that great
care needs to be taken in the specification of the most ``extreme''
scenarios to be considered.  We also study the common situation in
which, at least to a first approximation, the differences between the
scenario regret functions are represented by shifts in their
arguments; here the ``extreme'' scenarios are immediately identifiable
and the properties of a least worst regret analysis---particularly
robustness with respect to scenario variation---readily understood.
Finally in that section, we consider briefly the alternative Bayesian
analysis and how the outcome of a least worst regret analysis might
relate to it.

Section~\ref{sec:exampl-investm-reduc} studies the problem in which
each of the scenario cost functions is---again at least
approximately---the sum of an exponentially decaying and a linearly
increasing term. Here it is possible to make quite quantitative
deductions about the outcome of a least worst regret analysis and its
relation to that of possible Bayesian analyses.  This situation is
common when the decision problem is that of choosing an appropriate
level of investment---in the face of future uncertainty---so as to
manage risk.  It is in particular the case (again to a good
approximation) for the Great Britain electricity capacity procurement
problem which was introduced above and which is studied in detail in
the following section.

In the concluding Section~\ref{sec:conclusion} we make some further
brief comments about the sensible application of least worst regret
analysis.

\section{Minimax and least worst regret analysis}
\label{sec:least-worst-regret}

Both \emph{minimax} and \emph{least worst regret} (LWR) analysis are
defined in relation to a set~$\S$ of \emph{scenarios}---which in the
present note we take to be finite---and a set~$\D$ of possible
\emph{decisions}.  The decision set~$\D$ may be finite or infinite.
Corresponding to each scenario~$i\in\S$ is a \emph{cost
  function}~$f_i$ defined on the set~$\D$ such that for each $x\in\D$
the quantity~$f_i(x)$ is the cost associated with the decision~$x$
under the scenario~$i$.  The decision problem is that of choosing
$x\in\D$ such that the associated costs $f_i(x)$ are in some sense
minimised.  This problem as stated is not yet well-defined, and our
interest is the mathematical properties and relationships between the
various rival approaches to making it well-defined, in particular with
the \emph{minimax}, \emph{least worst regret} and \emph{Bayesian}
approaches (where the latter additionally requires an assignment of
probabilities to scenarios).

We shall find it convenient to think also of the decision problem
associated with any subset~$\S'$ of the set of scenarios~$\S$.  For
any such subset $\S'$, define further the maximum cost
function~$f_{\S'}$ by
\begin{equation}
  \label{eq:1}
  f_{\S'}(x) = \max_{i\in\S'}f_i(x).
\end{equation}
Then the \emph{minimax} solution $\mmx{x}_{\S'}$ to the decision
problem associated with the set~$\S'$ is the value of $x$ within the
decision set~$\D$ which minimises~$f_{\S'}(x)$.  In particular
$\mmx{x}_{\S}$ is the minimax solution associated with the entire set
of scenarios~$\S$.

We shall find it convenient to write also $\mmx{x}_i=\mmx{x}_{\{i\}}$
for each $i\in\S$ and similarly to write
$\mmx{x}_{kl}=\mmx{x}_{\{k,l\}}$ for each subset $\{k,l\}$ of $\S$ of
size two.  For each $i\in\S$, define also the \emph{regret}
function~$\hat{f}_i$ by
\begin{equation}\label{eq:2}
  \hat{f}_i(x) = f_i(x) - f_i(\mmx{x}_i)
\end{equation}
(where, as defined above, $\mmx{x}_i$ is the value of $x$ which
minimises $f_i(x)$ in the decision set~$\D$).  Analogously to the
definition of \eqref{eq:1}, for any subset $\S'$ of $S$, define also
the \emph{worst regret} function~$f_{\S'}$ associated with the set of
scenarios~$\S'$ by
\begin{equation}
  \label{eq:3}
  \hat{f}_{\S'}(x) = \max_{i\in\S'}\hat{f}_i(x).
\end{equation}
Then the \emph{least worst regret} (LWR) (or \emph{minimax regret})
solution $\lwr{x}_{\S'}$ to the decision problem associated with the
set~$\S'$ is the value of $x$ which minimises~$\hat{f}_{\S'}(x)$.
That is, the LWR solution $\lwr{x}_{\S'}$ associated with the
set~$\S'$ is the minimax solution associated with that set in the case
where the original cost functions~$f_i$, $i\in\S$, are replaced by the
regret functions~$\hat{f}_i$, $i\in\S$.  In particular $\lwr{x}_{\S}$
is the LWR solution associated with the entire set of scenarios~$\S$.

Note that while $\lwr{x}_i=\mmx{x}_i$ for each $i\in\S$, for any
subset~$\S'$ of $S$ of size at least two the quantities~$\mmx{x}_{S'}$
and $\lwr{x}_{S'}$ are in general different.  Further, if any of the
functions~$f_i$, $i\in\S'$, is adjusted by the addition (or
subtraction) of a constant, then the minimax solution~$\mmx{x}_{\S'}$
will in general change while (since the regret function $\hat{f}_i$
will be unaffected) the LWR solution $\lwr{x}_{\S'}$ will remain
unchanged.

However, note also that LWR analysis has the following dubious
property, which is essentially a restatement of that referred to in
the Introduction.  Suppose that the decision set~$\D$ is restricted to
some subset~$\D'$, and suppose further that in consequence there is at
least one scenario~$i\in\S$ such that the minimising value $\mmx{x}_i$
of the cost function~$f_i$ within the set~$\D$ fails to belong to the
set~$\D'$.  Then the regret function $\hat{f}_i$ defined in relation
to the set~$\D'$ differs from that defined in relation to the
set~$\D$.  Thus, for any set $\S'$ of scenarios, the restriction of
the decision set~$\D$ to $\D'$ in general changes the
solution~$\lwr{x}_{\S'}$ of the LWR analysis---even although the
original solution (as well as the new solution) may itself belong to
both $\D'$ and $\D$.  We therefore have the situation where which of
two possible decisions is to be preferred depends---somewhat
illogically---on what is happening elsewhere in the decision set.

We are interested in some further properties of both minimax and LWR
analysis of the decision problems defined by the possible scenario
sets~$\S'$, and in particular by the ``maximum'' set of
scenarios~$\S$.  Since as, explained above, for any such~$\S'$ the LWR
solution~$\lwr{x}_{\S'}$ is the minimax solution applied to the regret
functions~$\hat{f}_i$, $i\in\S'$, we concentrate initially on minimax
analysis.  Our results then transfer easily to LWR analysis.

\subsection{Minimax analysis}
\label{sec:minimax-analysis}

It is possible, but unusual in applications, that there exists some
scenario $k\in\S$ such that
\begin{equation}
  \label{eq:4}
  f_i(\mmx{x}_k) \le f_k(\mmx{x}_k) \quad\text{for all $i\in\S$}.
\end{equation}
It then follows immediately that $\mmx{x}_\S=\mmx{x}_k$ (and of course
that $\mmx{x}_{\S'}=\mmx{x}_k$ for any subset~$\S'$ of $\S$ such that
$k\in\S'$).  In this case we may think of the minimax
solution~$\mmx{x}_\S$ as being ``determined'' by the single
scenario~$k$ in the sense that if the functions $f_i$ associated with
the remaining scenarios~$i\neq k$ are varied within the region in
which~\eqref{eq:4} continues to hold, then we still have
$\mmx{x}_\S=\mmx{x}_k$.

We now assume that $|S|\ge2$, i.e.\ that there are at least two
scenarios.  Then, although there may be no single scenario~$k\in\S$
such that the relation~\eqref{eq:4} holds, it does very often
remain the case that we can find two scenarios $k,l\in\S$ such that
\begin{equation}
  \label{eq:5}
  f_i(\mmx{x}_{kl}) \le f_{kl}(\mmx{x}_{kl}) \quad\text{for all $i\in\S$}.
\end{equation}
Analogously to the earlier situation, it then follows that
$\mmx{x}_\S=\mmx{x}_{kl}$ and we may think of the minimax
solution~$\mmx{x}_\S$ as being ``determined'' by the two scenarios~$k$
and $l$ in the same sense as previously, i.e.\ if the functions $f_i$
associated with the remaining scenarios~$i\ne k$, $i\ne l$, are varied
within the region in which~\eqref{eq:5} continues to hold, then still
we have $\mmx{x}_\S=\mmx{x}_{kl}$.

Note that if the relation~\eqref{eq:4} \emph{does} hold for some
$k\in\S$, then the relation~\eqref{eq:5} also holds for that~$k$ and
for any other $l\in\S$ (with $\mmx{x}_{S}=\mmx{x}_{kl}=\mmx{x}_k$), so
that the former situation may be regarded as a special case of the
latter.

Figure~\ref{fig:lwr} illustrates the typical situation in which the
relation~\eqref{eq:5} holds: the decision set~$\D$ is taken to be the
entire real line and the cost functions~$f_i$ corresponding to
five scenarios~$i\in\S$ are plotted; it is seen that~\eqref{eq:5}
holds with $k$ and $l$ given (uniquely) by the two scenarios whose
cost functions are shown as solid lines, with the remaining cost
functions being shown as dashed lines.  As above, we then have that
the minimax solution~$\mmx{x}_\S=\mmx{x}_{kl}$ is ``determined'' by
the two scenarios~$k$ and $l$ in the sense discussed above.

\begin{figure}[!ht]
  \centering
  \includegraphics[scale=0.8]{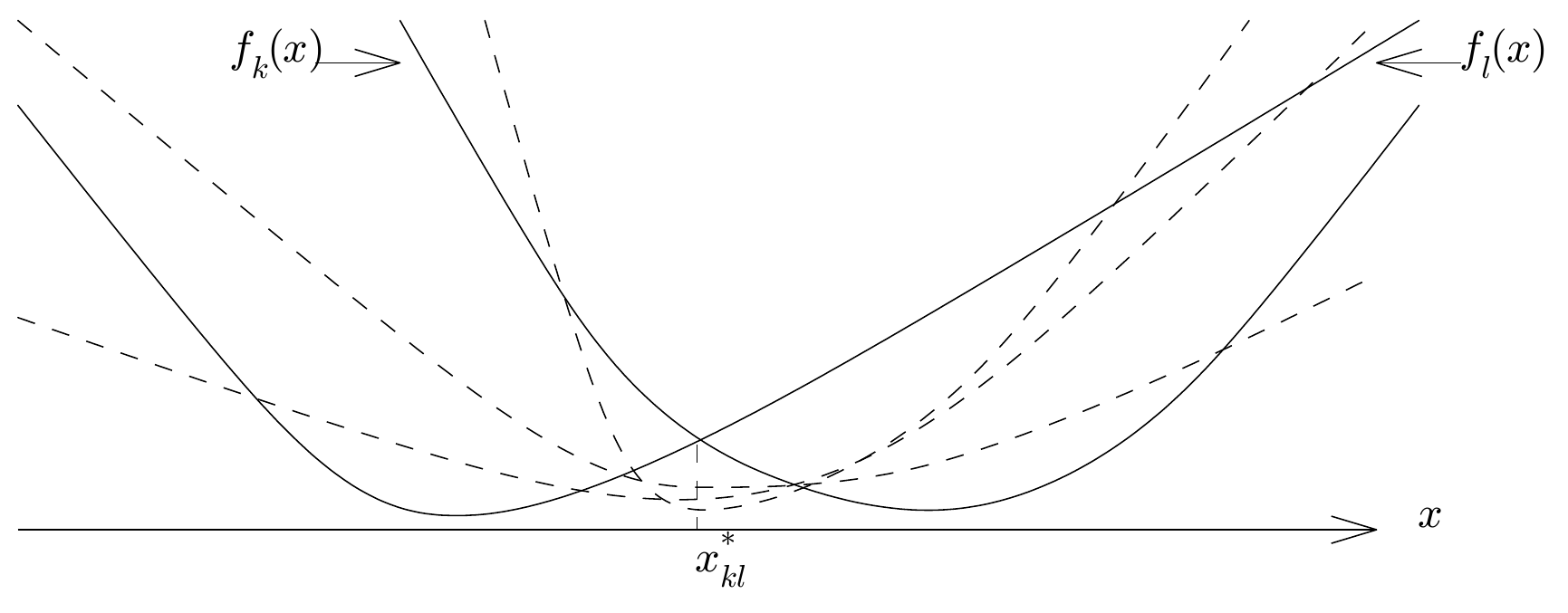}
  \caption{Illustrative cost functions for a set of five scenarios:
    the relation~\eqref{eq:5} holds for the two scenarios~$k$ and $l$
    whose cost functions are shown as solid lines.}
  \label{fig:lwr}
\end{figure}

We now specialise to the case where the decision~$\D$ is some interval
$[x_a,x_b]$ of the real line, where we may have $x_a=-\infty$ and/or
$x_b=\infty$, and where (as in the example of Figure~\ref{fig:lwr})
the functions~$f_i$, $i\in\S$, are convex.  These conditions are
usually satisfied in practice.  For simplicity we further assume that
the functions~$f_i$ are strictly convex, so that in particular all
minima below will be uniquely defined; we subsequently remark how the
assumption of strict convexity may be relaxed.  We then have the
following result.

\begin{proposition}\label{proposition:1}
  Assume that the decision set~$\D$ is given by some interval
  $[x_a,x_b]$ of the real line and that the functions~$f_i$ are
  strictly convex.  Then there always exist two scenarios $k,l\in\S$
  such that the relation~\eqref{eq:5} holds.  In particular we then
  have, as above, that the minimax solution $\mmx{x}_\S$ associated
  with the set~$\S$ is equal to the minimax solution $\mmx{x}_{kl}$
  defined by the scenarios~$k$ and $l$ (and indeed
  $\mmx{x}_{\S'}=\mmx{x}_{kl}$ for any subset~$\S'$ of $\S$ which
  contains both $k$ and $l$).
\end{proposition}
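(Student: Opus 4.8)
The plan is to locate the minimax point $\mmx{x}_\S$ of the maximum cost function $f_\S$ and to read off the two required scenarios from the ``first order'' behaviour of the active cost functions there. First I would note that, being the maximum of finitely many strictly convex functions, $f_\S$ is itself strictly convex on $\D$, so its minimiser $\mmx{x}_\S$ is unique (I would take for granted, as the definitions of the various minimisers already presuppose, that these minima are attained; on an unbounded interval strict convexity together with attainment of each individual minimum forces $f_i\to\infty$ at the unbounded ends, so that $f_\S$ is coercive and $\mmx{x}_\S$ indeed exists). Write $M = f_\S(\mmx{x}_\S)$ and let $A = \{i\in\S : f_i(\mmx{x}_\S) = M\}$ be the set of scenarios ``active'' at $\mmx{x}_\S$. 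The only structural fact I would use is that each strictly convex $f_i$ is strictly decreasing on $[x_a,\mmx{x}_i]$ and strictly increasing on $[\mmx{x}_i,x_b]$; this lets me avoid any differentiability assumption.

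The core step is to show that the active scenarios ``straddle'' $\mmx{x}_\S$. Concretely, I would first establish that there is an active $l\in A$ with $\mmx{x}_l \le \mmx{x}_\S$: were every active scenario to have its own minimum strictly to the right of $\mmx{x}_\S$, then each active $f_i$ would be strictly decreasing at $\mmx{x}_\S$, while each of the finitely many inactive scenarios satisfies $f_i(\mmx{x}_\S) < M$ and hence stays below $M$ throughout some neighbourhood; moving a small distance to the right of $\mmx{x}_\S$ (possible, since the supposition forces $\mmx{x}_\S < x_b$) would then strictly decrease $f_\S$, contradicting the minimality of $\mmx{x}_\S$. The symmetric argument yields an active $k\in A$ with $\mmx{x}_k \ge \mmx{x}_\S$. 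These two sub-arguments also dispose of the boundary cases automatically: if $\mmx{x}_\S = x_a$ the first forces $\mmx{x}_l = \mmx{x}_\S$, and if $\mmx{x}_\S = x_b$ the second forces $\mmx{x}_k = \mmx{x}_\S$.

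It then remains to convert the pair $k,l$ into~\eqref{eq:5}. If either minimum coincides with $\mmx{x}_\S$ --- say $\mmx{x}_k = \mmx{x}_\S$ --- then $f_i(\mmx{x}_k) = f_i(\mmx{x}_\S) \le M = f_k(\mmx{x}_k)$ for every $i\in\S$, which is exactly~\eqref{eq:4}; since $|\S|\ge2$, the remark preceding the proposition then delivers~\eqref{eq:5} for $k$ together with any other scenario. Otherwise $\mmx{x}_l < \mmx{x}_\S < \mmx{x}_k$, and I would verify directly that $\mmx{x}_{kl} = \mmx{x}_\S$: for $x > \mmx{x}_\S$ monotonicity gives $f_l(x) > f_l(\mmx{x}_\S) = M$, and for $x < \mmx{x}_\S$ it gives $f_k(x) > f_k(\mmx{x}_\S) = M$, so $f_{kl}(x) > M = f_{kl}(\mmx{x}_\S)$ for all $x\neq\mmx{x}_\S$. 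Hence $f_{kl}(\mmx{x}_{kl}) = M$, and therefore $f_i(\mmx{x}_{kl}) = f_i(\mmx{x}_\S) \le M = f_{kl}(\mmx{x}_{kl})$ for every $i$, which is~\eqref{eq:5}. The remaining assertions $\mmx{x}_\S = \mmx{x}_{kl}$ and $\mmx{x}_{\S'} = \mmx{x}_{kl}$ follow from the discussion already given immediately before the proposition.

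I expect the main obstacle to be this straddling step --- the ``first order'' claim that the active cost functions cannot all have their minima on the same side of $\mmx{x}_\S$ --- together with the need to treat the endpoints $x_a,x_b$ cleanly. Casting the argument in terms of the monotonicity of strictly convex functions, rather than in terms of derivatives or subdifferentials, is what both keeps it short and allows the boundary cases to fold into the interior one with no separate treatment.
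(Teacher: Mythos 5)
Your proof is correct, and its skeleton is the same as the paper's: locate the minimax point $\mmx{x}_\S$, look at the scenarios whose cost functions achieve the maximum there, and show that two of them must ``straddle'' $\mmx{x}_\S$, which forces $\mmx{x}_{kl}=\mmx{x}_\S$ and hence \eqref{eq:5} via $f_i(\mmx{x}_\S)\le M$ for all $i$. The difference is in the technical device used to express the straddling. The paper works with the sign of $f_k'(\mmx{x}_\S)$ (first assuming differentiability and finite endpoints), argues that a positively sloped active scenario must be matched by a non-positively sloped active one, and then disposes of non-differentiable $f_i$ and infinite $x_a,x_b$ by brief remarks about supporting hyperplanes, compactification and limiting arguments. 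You instead encode the same first-order information in the position of each individual minimiser $\mmx{x}_i$ relative to $\mmx{x}_\S$ and use only the strict monotonicity of a strictly convex function on either side of its minimum; since $f_k'(\mmx{x}_\S)\ge 0$ is equivalent to $\mmx{x}_k\le\mmx{x}_\S$ in the differentiable case, this is the same dichotomy in different clothing. What your formulation buys is that the non-differentiable case and the boundary/unbounded cases require no separate treatment: the contradiction argument for the active set automatically yields $\mmx{x}_l=\mmx{x}_\S$ when $\mmx{x}_\S=x_a$ (and symmetrically at $x_b$), and no derivatives or subdifferentials ever appear. Your added observations --- that $f_\S$ is strictly convex so $\mmx{x}_\S$ is unique, and that attainment of the individual minima forces coercivity on an unbounded interval --- fill in points the paper leaves implicit. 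The one place where you, like the paper, pass quickly is the continuity of the inactive $f_i$ near $\mmx{x}_\S$ when $\mmx{x}_\S$ is an endpoint, but for convex functions on an interval the one-sided limit at an endpoint never exceeds the endpoint value, so the ``stays below $M$ in a neighbourhood'' claim survives.
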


\begin{proof}
  Suppose first that $x_a$ and $x_b$ are finite and that the
  functions~$f_i$ are further differentiable.  Clearly, there exists
  some $k\in\S$ such that $f_k(\mmx{x}_\S)=f_\S(\mmx{x}_\S)$.  Note
  that this implies that
  \begin{equation}
    \label{eq:6}
    f_i(\mmx{x}_\S) \le f_k(\mmx{x}_\S)
    \qquad\text{for all $i\in\S$.}
  \end{equation}
  Now observe that if $f_k'(\mmx{x}_\S)\ge0$ then, by convexity,
  $f_k(x)>f_k(\mmx{x}_\S)$ for all $x\in(\mmx{x}_\S,x_b]$ and hence
  $\mmx{x}_k\le \mmx{x}_\S$.  Similarly if $f_k'(\mmx{x}_\S)\le0$ then
  $f_k(x)>f_k(\mmx{x}_\S)$ for all $x\in[x_a,\mmx{x}_\S)$ and hence
  $\mmx{x}_k\ge \mmx{x}_\S$.  It follows that if
  $f_k'(\mmx{x}_\S)\ge0$ and $\mmx{x}_\S=x_a$, or if
  $f_k'(\mmx{x}_\S)\le0$ and $\mmx{x}_\S=x_b$, or if
  $f_k'(\mmx{x}_\S)=0$, then $\mmx{x}_k=\mmx{x}_\S$; if this is so,
  then it further follows from~\eqref{eq:6} that the
  relation~\eqref{eq:4} holds and so, as already noted, the
  relation~\eqref{eq:5} holds for~$k$ and any $l\ne k$.

  Otherwise, it is the case that either $f_k'(\mmx{x}_\S)>0$ and
  $\mmx{x}_\S>x_a$ or else $f_k'(\mmx{x}_\S)<0$ and $\mmx{x}_\S<x_b$.
  Without loss of generality, we suppose the first of these two
  possibilities to be the case.  It follows from these conditions and
  from~\eqref{eq:6} that there necessarily exists some $l\ne k$ such
  that $f_l(\mmx{x}_\S)=f_k(\mmx{x}_\S)$ and $f_l'(\mmx{x}_\S)\le0$
  (for otherwise there would exist some $x'\in[x_a,\mmx{x}_\S)$ such
  that $f_\S(x')<f_k(\mmx{x}_\S)=f_\S(\mmx{x}_\S)$, in contradiction
  to the definition of $\mmx{x}_\S$).  Then, from convexity as above,
  \begin{equation}
    \label{eq:7}
    f_l(x)>f_l(\mmx{x}_\S)
    \qquad\text{for all $x\in[x_a,\mmx{x}_\S)$.}
  \end{equation}
  Since also (once more by convexity)
  \begin{equation}
    \label{eq:8}
    f_k(x)>f_k(\mmx{x}_\S)=f_l(\mmx{x}_\S)
    \qquad\text{for all $x\in(\mmx{x}_\S,x_b]$,}
  \end{equation}
  it follows from \eqref{eq:7} and \eqref{eq:8} that
  $\mmx{x}_{kl}=\mmx{x}_\S$.  Thus (using also~\eqref{eq:6}) the
  relation~\eqref{eq:5} holds.


  Once the above argument is understood, only obvious modifications
  are required to deal with the case where we may have $x_a=-\infty$
  or $x_b=\infty$---e.g.\ we may compactify $\D$ by the addition of
  these points as necessary and extend the definitions of the convex
  functions~$f_i$ and their derivatives by taking limits.  Similarly
  only obvious modifications are required to deal with the case where
  some or all of the convex functions~$f_i$, $i\in\S$, fail to be
  differentiable: any derivative such as $f_i'(x)$ may be replaced by
  the slope of a supporting hyperplane to $f_i$ at~$x$; all that is
  further required is some extra care in dealing with the
  nonuniqueness of these supporting hyperplanes.  Alternatively
  nondifferentiable convex functions may be represented as pointwise
  limits of differentiable convex functions, and standard limiting
  arguments used.
\end{proof}

\begin{remark}
  Under the relaxation of the requirement that the convexity of the
  functions~$f_i$, $i\in\S$, be strict,
  Proposition~\ref{proposition:1} continues to hold provided that, in
  the relation~\eqref{eq:5}, $\mmx{x}_{kl}$ is understood to refer to
  \emph{some} value of the (no longer necessarily uniquely defined)
  minimum of the function $f_{kl}$.
\end{remark}

\subsection{LWR analysis}
\label{sec:lwr-analysis}

As previously remarked, the LWR solution~$\lwr{x}_{\S'}$ associated
with any subset $\S'$ of $\S$ is just the minimax solution applied to
the regret functions $\hat{f}_i$, $i\in\S'$, rather than to the
original cost functions $f_i$, $i\in\S'$.  Therefore the analysis of
Section~\ref{sec:minimax-analysis} is equally applicable to LWR
decision making.

Thus in the case where we can find two scenarios $k,l\in\S$ such that
\begin{equation}
\label{eq:9}
\hat{f}_i(\lwr{x}_{kl}) \le \hat{f}_{kl}(\lwr{x}_{kl})
\quad\text{for all $i\in\S$},
\end{equation}
it follows that $\lwr{x}_\S=\lwr{x}_{kl}$ and we may think of the LWR
solution~$\lwr{x}_\S$ as being ``determined'' by the two scenarios~$k$
and $l$ in the same sense as previously: that is, if the functions
$f_i$ associated with the remaining scenarios~$i\ne k$, $i\ne l$, are
varied within the region in which~\eqref{eq:9} continues to hold,
then still we have $\lwr{x}_\S=\lwr{x}_{kl}$.

We further have that Proposition~\ref{proposition:1} for minimax
analysis translates immediately to the following equivalent result for
LWR analysis.

\begin{proposition}\label{proposition:2}
  Assume that the decision set~$\D$ is given by some interval
  $[x_a,x_b]$ of the real line and that the functions~$f_i$ are
  strictly convex.  Then there always exist two scenarios $k,l\in\S$
  such that the relation~\eqref{eq:9} holds.  In particular we then
  have that the LWR solution $\lwr{x}_\S$ associated with the set~$\S$
  is equal to the LWR solution $\lwr{x}_{kl}$ defined by the
  scenarios~$k$ and $l$ (and indeed $\lwr{x}_{\S'}=\lwr{x}_{kl}$ for
  any subset~$\S'$ of $\S$ which contains both $k$ and $l$).
\end{proposition}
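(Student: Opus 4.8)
The plan is to obtain Proposition~\ref{proposition:2} as an immediate corollary of Proposition~\ref{proposition:1}, applied to the regret functions $\hat{f}_i$ in place of the original cost functions $f_i$. The enabling observation is that, by~\eqref{eq:2}, each regret function $\hat{f}_i$ differs from $f_i$ only by the additive constant $-f_i(\mmx{x}_i)$; since translation by a constant neither creates nor destroys strict convexity, strict convexity of every $f_i$ on $[x_a,x_b]$ forces strict convexity of every $\hat{f}_i$ there. Thus the family $\{\hat{f}_i : i \in \S\}$ satisfies all the hypotheses of Proposition~\ref{proposition:1}.

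First I would recall, as noted in the text preceding the statement, that the LWR solution $\lwr{x}_{\S'}$ for any subset $\S'$ is by definition exactly the minimax solution for the regret functions $\{\hat{f}_i : i \in \S'\}$; in particular $\lwr{x}_{kl}$ minimises $\hat{f}_{kl}(x) = \max_{i\in\{k,l\}}\hat{f}_i(x)$, so that the worst regret function $\hat{f}_{\S'}$ of~\eqref{eq:3} occupies precisely the structural role played by the maximum cost function $f_{\S'}$ of~\eqref{eq:1}. I would then invoke Proposition~\ref{proposition:1} with the $\hat{f}_i$ substituted for the $f_i$ throughout: this delivers two scenarios $k,l \in \S$ satisfying the direct analogue of~\eqref{eq:5}, namely $\hat{f}_i(\lwr{x}_{kl}) \le \hat{f}_{kl}(\lwr{x}_{kl})$ for all $i\in\S$, which is exactly~\eqref{eq:9}. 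The remaining assertions---that $\lwr{x}_\S = \lwr{x}_{kl}$, and that $\lwr{x}_{\S'} = \lwr{x}_{kl}$ for every $\S'$ containing both $k$ and $l$---then follow from the general discussion surrounding~\eqref{eq:9} in exactly the way the corresponding minimax conclusions followed from~\eqref{eq:5}.

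I do not expect any genuine obstacle, since essentially all the work has already been carried out in Proposition~\ref{proposition:1}; the substantive content here is the recognition that LWR analysis \emph{is} minimax analysis of the regret functions. The only points needing even minor attention are the transfer of strict convexity, which is immediate, and the consistency check that the minimiser of each $\hat{f}_i$ coincides with $\mmx{x}_i$ (so indeed $\lwr{x}_i = \mmx{x}_i$), which confirms that the substitution respects the definition~\eqref{eq:2} of the regret functions and that the compactification and nondifferentiability remarks closing the proof of Proposition~\ref{proposition:1} apply here without change.
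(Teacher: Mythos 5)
Your proposal is correct and is essentially identical to the paper's treatment: the paper gives no separate proof, simply observing that since the LWR solution is the minimax solution applied to the regret functions (which inherit strict convexity, being the $f_i$ shifted by constants), Proposition~\ref{proposition:1} translates immediately. Your additional remarks on the transfer of strict convexity and the identity $\lwr{x}_i=\mmx{x}_i$ are accurate and consistent with the paper.
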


The requirement that the convexity be strict may be relaxed in the
same sense as previously.  Figure~\ref{fig:lwr} continues to
illustrate the situation, if the plotted functions are now regarded as
the regret functions~$\hat{f}_i$ rather than the original cost
functions~$f_i$.

\subsection{Argument shifts.}
\label{sec:linear-shifts}

We continue to concentrate on LWR analysis, and we assume, for
simplicity, that the decision set~$\D$ is the entire real line.
Suppose that the functions~$f_i$---or equivalently the
functions~$\hat{f}_i$---are convex and are ordered by their individual
minimising values~$\lwr{x}_i=\mmx{x}_i$ so that
$\lwr{x}_1\le\dots\le \lwr{x}_n$.  It is then frequently the case that
the two ``extreme'' scenarios~$1$ and $n$ play the role of the
scenarios~$k$ and $l$ such that the relation~\eqref{eq:9} holds, so
that in particular the LWR solution is given by
$\lwr{x}_\S=\lwr{x}_{1n}$ and that yet again this solution is
``determined'' by the scenarios~$1$ and~$n$ in the sense described
earlier.

An important special case in which the above is true occurs when (at
least to a sufficiently good approximation) the functions~$f_i$ are
further such that the regret functions~$\hat{f}_i$ differ simply by
argument shifts, i.e.\ there exist some convex function~$g$ and
constants $a_1<a_2<\dots<a_n$, such that
\begin{equation}
  \label{eq:10}
  \hat{f}_i(x)=g(x-a_i), \qquad i \in \S,
\end{equation}
(implying in particular that $\lwr{x}_i=\lwr{x}+a_i$ for all~$i$, where
$\lwr{x}$ minimises $g(x)$).  In this case we note further that the
value of $\lwr{x}_{1n}$ in relation to $\lwr{x}_1$ and $\lwr{x}_n$ of
depends on shape of the functions~$\hat{f}_1$ and~$\hat{f}_n$ on
either side of their individual minimising values~$\lwr{x}_1$ and
$\lwr{x}_n$.  In particular, if additionally the functions~$\hat{f}_i$
are symmetric about their individual minimising values~$\lwr{x}_i$
then $\lwr{x}_\S=\lwr{x}_{1n}=(\lwr{x}_1+\lwr{x}_n)/2$.  Notably this
symmetry obtains to a sufficiently good approximation when $\lwr{x}_1$
and $\lwr{x}_n$ are sufficiently close ($a_n-a_1$ is sufficiently
small) that these $\hat{f}_1$ and $\hat{f}_n$ may be approximated in
the interval $[\lwr{x}_1,\lwr{x}_n]$ by quadratics with necessarily
the same second derivative.

\subsection{Alternative probabilistic analysis}
\label{sec:altern-prob-analys}

An alternative, essentially Bayesian, analysis may be given by
assigning a probability~$p_i\ge0$ to each possible scenario~$i$ (where
$\sum_{i\in\S}p_i=1$) and then, for example, determining that value
$x_{\mathrm{bayes}}$ of $x$ in the decision set~$\D$ which minimises
the expected cost function
\begin{equation}
  \label{eq:11}
  f(x) = \sum_{i\in\S}p_if_i(x),
\end{equation}
or, equivalently, the expected regret function
$\sum_{i\in\S}p_i\hat{f}_i(x)$.


While decision makers may choose to use LWR analysis so as to avoid
assigning explicit probabilities, it is nevertheless of interest to
understand which sets of probabilities within a Bayesian analysis as
above are compatible with the results of an LWR analysis.  The set of
possible probability measures which may be defined on the $n$
scenarios forms a subset of $(n-1)$-dimensional Euclidean space, and
the requirement the value $x_{\mathrm{bayes}}$ of $x$ which minimises
$f(x)$ as given by~\eqref{eq:11} should be equal to the
LWR~$\lwr{x}_\S$ typically restricts this set of probability measures
to a subset of an $(n-2)$-dimensional space.  We consider the matter
further in Section~\ref{sec:exampl-investm-reduc}.


\section{Investment to reduce risk}
\label{sec:exampl-investm-reduc}

In many applications the decision to be made is that of choosing a
level of provision~$x$, for example for a given industrial
infrastructure, needed to balance associated risk.  The cost of the
provision increases linearly in $x$, while the cost associated with
the risk decays, typically close to exponentially, in $x$.

The example we have in mind particularly is that of electricity
capacity procurement, i.e.\ the determination of the level of
generation capacity which countries (or groups of countries) must make
in order to ensure adequacy of future electricity supplies.  The
distribution over time of the balance of supply over demand for the
future period under study (usually a given year or the peak season of
a year) is modelled by a probability distribution whose left
tail---that corresponding to the region in which supply is
insufficient to meet demand---is typically well approximated by an
exponential function; this distribution is shifted according to the
level of capacity provision~$x$ so that both the expected total
duration of shortfall (the \emph{loss of load expectation}) and the
expected volume of shortfall (the \emph{expected energy unserved}) for
the future period under study decay approximately exponentially in
$x$.  Investment decisions must typically be made some years in
advance and the future risk associated with any given level of
capacity provision depends on which of a number of ``future energy
scenarios'' eventually proves to be most appropriate to the period in
question.  The likelihoods of these scenarios are typically difficult
to quantify probabilistically, and so LWR analysis is often used to
determine the recommended level of investment.

Thus, we have a set of scenarios~$\S$ and, for any scenario~$i$ in
$\S$ and level of capacity provision~$x$, the expected cost of, for
example, a supply-demand imbalance is (at least approximately) of the
form $ a_i e^{-\lambda_i x}$.  If, further, the cost of a level of
investment is proportional to $x$, then the total cost function~$f_i$
associated with each scenario~$i$ is given by
\begin{equation}
  \label{eq:12}
  f_i(x) = b_i e^{-\lambda_i x} + cx,
\end{equation}
for some constants~$\lambda_i>0$, $b_i>0$, and $c>0$, where the latter
typically does not depend on $i$.  For simplicity we again assume the
functions~$f_i$ to be defined on the decision set consisting of the
entire real line---the adjustments required if the decision set is
restricted to some interval of the real line are straightforward.  The
solution~$\lwr{x}_{\S}$ of an LWR analysis is thus readily calculated.
(As already observed this will remain the same under adjustment of any
of the functions~$f_i$ by the addition or subtraction of an arbitrary
constant.)  Since the functions~$f_i$ are strictly convex, it follows
as in Section~\ref{sec:lwr-analysis} that there always exist two
``extreme'' scenarios which ``determine'' the LWR
solution~$\lwr{x}_{\S}$ in the sense discussed there.  In any given
situation this pair is readily numerically identifiable, but our
interest here is to provide some insight as to what it is which really
drives the results of any LWR analysis.

We specialise to the case where the exponential decay
constants~$\lambda_i$ are given by the same constant~$\lambda$ for all
$i\in\S$.  That this is frequently a good approximation in the case of
the electricity capacity procurement example above follows from the
fact that differences between scenarios are often well represented
simply by argument shifts in the distribution of the supply-demand
balance, coupled with the fact that, as already remarked, the left
tail of this distribution is typically exponential---again at least to
a good approximation; it then follows that differences between
scenarios typically correspond simply to differences in the
constants~$b_i$ in~\eqref{eq:12}.  Further, even when this
approximation is less than perfect, the broad conclusions of the analysis
below are likely to continue to apply.  A similar situation occurs in
many other areas of application.

Thus the cost functions $f_i$ are as given by \eqref{eq:12} with
$\lambda_i=\lambda$ for all scenarios~$i$, and we assume that these
scenarios are ordered so that $b_1<\dots<b_n$.  Each function~$f_i$ is
then minimised at
\begin{equation}
  \label{eq:13}
  \lwr{x}_i = \frac{1}{\lambda}\log\frac{b_i\lambda}{c}
\end{equation}
and we have $\lwr{x}_1<\dots<\lwr{x}_n$.  The regret
functions~$\hat{f}_i$ are given by
\begin{equation}
  \label{eq:14}
  \hat{f}_i(x) = b_i e^{-\lambda x} + cx
  - \frac{c}{\lambda}\biggl(1 + \log\frac{b_i\lambda}{c}\biggr)
\end{equation}
for all $x$ as usual, and it is easily seen that these satisfy the
relation~\eqref{eq:10} with the function~$g$ given by
\begin{displaymath}
  g(x) = e^{-\lambda x} + cx
  - \frac{c}{\lambda}\biggl(1 + \log\frac{\lambda}{c}\biggr)
\end{displaymath}
and $a_i=(\log b_i)/\lambda$ for each $i\in\S$.  It follows from the
discussion of Section~\ref{sec:linear-shifts} that the scenarios $1$
and $n$ are ``extreme'' in the sense that the relation~\eqref{eq:9}
holds with $k=1$ and $l=n$, so that once more the LWR solution is
given by $\lwr{x}_\S=\lwr{x}_{1n}$ and this solution is determined by
the ``extreme'' scenarios~$1$ and~$n$ in the sense described in
Section~\ref{sec:least-worst-regret}.  Further the quantity
$\lwr{x}_{1n}$ is the unique solution $x=\lwr{x}_{1n}$ of the equation
$\hat{f}_1(x)=\hat{f}_n(x)$.  Thus an entirely routine calculation
gives that the LWR solution~$\lwr{x}_{\S}$ is given by
\begin{equation}
  \label{eq:15}
  \lwr{x}_{\S}
  = \lwr{x}_{1n}
  = \frac{1}{\lambda}\log\frac{\lambda(b_n-b_1)}{c(\log b_n - \log b_1)},
\end{equation}
irrespective of the values of $b_2,\dots,b_{n-1}$, provided only that
the inequalities $b_1<\dots<b_n$ continue to be satisfied.

Note also that when the extreme scenarios $1$ and $n$ are close to
each other, i.e.\ $b_1$ and $b_n$ are close so that we may write
$b_n=b_1(1+d)$ for some small $d>0$, then again straightforward
calculations give the approximations
\begin{align}
  \lwr{x}_n & \approx \lwr{x}_1 + \frac{d}{\lambda}, \label{eq:16}\\
  \lwr{x}_{1n} & \approx \lwr{x}_1 + \frac{d}{2\lambda}, \label{eq:17}
\end{align}
where in each case the error in the approximation is of the order of
$d^2$ as $d\to0$.  Thus, when $\lwr{x}_1$ and $\lwr{x}_n$ are close,
then $\lwr{x}_{1n}$ is, to a very good approximation, the mean of
$\lwr{x}_1$ and $\lwr{x}_n$.  However, in many applications $d$ as
defined above is not small, and it is easy to check that it is then
the case that $\lwr{x}_{1n}$ is closer to $\lwr{x}_n$ than to
$\lwr{x}_1$.

We now look at the alternative probabilistic, Bayesian, analysis
considered in Section~\ref{sec:altern-prob-analys}, in which a
probability~$p_i\ge0$ (such that $\sum_{i\in\S}p_i=1$) is assigned to
each possible scenario~$i\in\S$.  When the cost functions~$f_i$ are as
given by~\eqref{eq:12}, then the value~$x_{\mathrm{bayes}}$ of $x$
which minimises the (strictly convex) expected cost
function~\eqref{eq:11} is the unique solution $x=x_{\mathrm{bayes}}$
of
\begin{equation}
  \label{eq:18}
  \sum_{i\in\S} p_ib_i\lambda_i e^{-\lambda_i x} = c.
\end{equation}
In particular when $\lambda_i=\lambda$ for all $i$, we have
\begin{equation}
  \label{eq:19}
  x_{\mathrm{bayes}} = \frac{1}{\lambda}\log\frac{\lambda\sum_{i\in\S}p_ib_i}{c},
\end{equation}
and it then follows from \eqref{eq:15} and \eqref{eq:19} that we have 
$\lwr{x}_{\S}=\lwr{x}_{1n}=x_{\mathrm{bayes}}$ if and only if
\begin{equation}
  \label{eq:20}
  \sum_{i\in\S}p_ib_i = \bar{b},
\end{equation}
where the quantity $\bar{b}$ is given by
\begin{equation}
  \label{eq:21}
  \bar{b} = \frac{b_n-b_1}{\log b_n - \log b_1}.
\end{equation}
The equations~\eqref{eq:20} and \eqref{eq:21} therefore determines
those sets of probabilities~$p_i$ which, if used in a probabilistic
analysis, yield the same decision as that given by the LWR analysis.
Note that it is straightforward to show that $b_1<\bar{b}<b_n$---and
indeed that $\bar{b}$ is approximately the mean of $b_1$ and $b_n$
when these two quantities are close to each other.  The
equation~\eqref{eq:20} has the interpretation that, in the case where
$\lambda_i=\lambda$ for all~$i$, the LWR analysis and the
probabilistic analysis yield the same solutions (values of the
decision variable~$x$) if and only if the probabilities~$p_i$ are such
that the corresponding expectation of the scenario parameters $b_i$ is
equal to the ``parameter''~$\bar{b}$. 

\section{Example: electricity capacity procurement in Great Britain}
\label{sec:exampl-electr-capac}

National Grid---the electricity system operator in Great Britain---has
a statutory responsibility to produce an annual report to Government
recommending a level of GB generation capacity procurement in order to
provide adequate security of future electricity supplies.  Its 2015
Electricity Capacity Report (ECR)~\cite{ECR2015} is concerned to
recommend a level of procurement for the ``year'', i.e.\ the winter,
of 2019--20 (winter being the GB peak season of electricity demand).
The report considers a set~$\S$ of 19 possible future scenarios and
``sensitivities'' (we shall sometimes simply refer to all of these as
scenarios) for the above future period.  Associated with each such
scenario~$i$ in $\S$ is an (annual) cost function~$f_i$ given by
\begin{equation}
  \label{eq:22}
  f_i(x) = \voll\times\eeu_i(x) + \cone\times x,
\end{equation}
where $x$ is a possible value of generation \emph{\ctp}---typically
measured in MW---which might be considered for recommendation and
$\eeu_i(x)$ is the corresponding \emph{expected energy unserved}, as
defined in Section~\ref{sec:exampl-investm-reduc} and usually measured
in MWh, over the future period studied; the constants \voll\ and
\cone\ are respectively the \emph{value of lost load} and the
so-called \emph{cost of new entry}, i.e.\ the unit cost per year of
generation capacity which might be procured.  The report uses values
of these constants given by $\voll=\pounds 17,000$/MWh and
$\cone=\pounds 49,000$/MW/year.

The functions $\eeu_i(x)$ decay approximately exponentially in $x$, so
that the cost functions~$f_i$ are strictly convex and approximately of
the form~\eqref{eq:12}; further the exponential constants~$\lambda_i$
in~\eqref{eq:12} are approximately equal over all scenarios $i\in\S$;
however, while the exponential approximation is useful for exploring
many issues, we do not make any formal use of it in the analysis
below.

The choice of recommended \ctp~$x$ is made on the basis of LWR
analysis.  The decision set $\D$ of allowed values of~$x$ to be
considered is not quite continuous, as National Grid also have a
requirement, for the purposes of further analysis, to associate a
particular scenario (or sensitivity) with the value of~$x$ finally
recommended.  The decision set~$\D$ is thus restricted to a set of 19
values $x_i$, $i\in\S$, such that each $x_i$ is just sufficient for
scenario~$i$ to meet a specified reliability standard.  This
reliability standard is fairly closely aligned with the values of the
constants \voll\ and \cone\ so that each value~$x_i$ in the decision
set approximately minimises the function~$f_i(x)$ given by
\eqref{eq:22}.\footnote{The reliability standard requires that each
  $x_i$ should be such that the associated \emph{loss of load
    expectation} (\lole)---again as defined in
  Section~\ref{sec:exampl-investm-reduc}---does not exceed 3 hours per
  year.  Under the exponential approximation~\eqref{eq:12}, each cost
  function~$f_i$ is minimised by that value of~$x$ such that the
  corresponding \lole\ is given by the ratio \cone/\voll, which is
  2.88 hours per year---see~\cite{DECCRS} for the associated analysis.
  This latter figure is sufficiently close to the \lole\ defining the
  reliability standard that, in the case of any \emph{single} scenario
  $i\in\S$, the requirement of choosing~$x$ to meet the reliability
  standard does not significantly conflict with the economic criterion
  of choosing $x$ so as to minimise the cost function~$f_i(x)$.}
However, for the present analysis we treat the decision set~$\D$ as
continuous and formally consisting of all possible values of $x$.  It
turns out that if we do this, and determine the resulting
solution~$\lwr{x}_\S$ of the LWR analysis, the set of allowed valued
considered in the 2015 ECR is sufficiently dense in the neighbourhood
of~$\lwr{x}_\S$ that we can find one such value very close
to~$\lwr{x}_\S$.  Hence the assumption of the present analysis to
treat the decision set~$\D$ as continuous makes very little difference
in practice (see also the results below).

Figure~\ref{fig:ecr_figure_14} relates to the five major scenarios
considered by the 2015 ECR, together with two of the remaining 14
variant scenarios or ``sensitivities'' also considered.  Four of these
five major scenarios were developed by National Grid and are discussed
in detail in the above report.  The fifth is a reference scenario
developed by the UK Department of Energy and Climate Change (DECC).
(Because of a requirement that the recommendations of the report be
independent of Government, this latter scenario is ultimately excluded
from the analysis of the report, something which---as we point out
below---makes no difference to the result of the LWR analysis.)  For
each of the scenarios~$i$ illustrated, Figure~\ref{fig:ecr_figure_14}
plots the total cost~$f_i(x)$ given by \eqref{eq:22} against the
corresponding \emph{\ctp}~$x$, where here $x$ is given in gigawatts.
The curves corresponding to the five major scenarios are shown as
solid lines, while those corresponding to the two variant
``sensitivities'' are shown as dashed lines; the names attached to the
scenarios are those of the 2015 ECR.  This figure is essentially a
reproduction of Figure~14 of the 2015 ECR; the latter figure plots the
same seven cost functions.  Note that these cost functions are all
convex, as are those corresponding to the remaining 12
``sensitivities'' not illustrated.  The latter cost functions are all
pointwise intermediate between those for the two ``sensitivities''
which have been plotted, and are omitted both from the present figure
and from Figure~14 of the 2015 ECR so as to avoid undue clutter (see
also below).

\begin{figure}[!ht]
  \centering
  \includegraphics[scale=0.7]{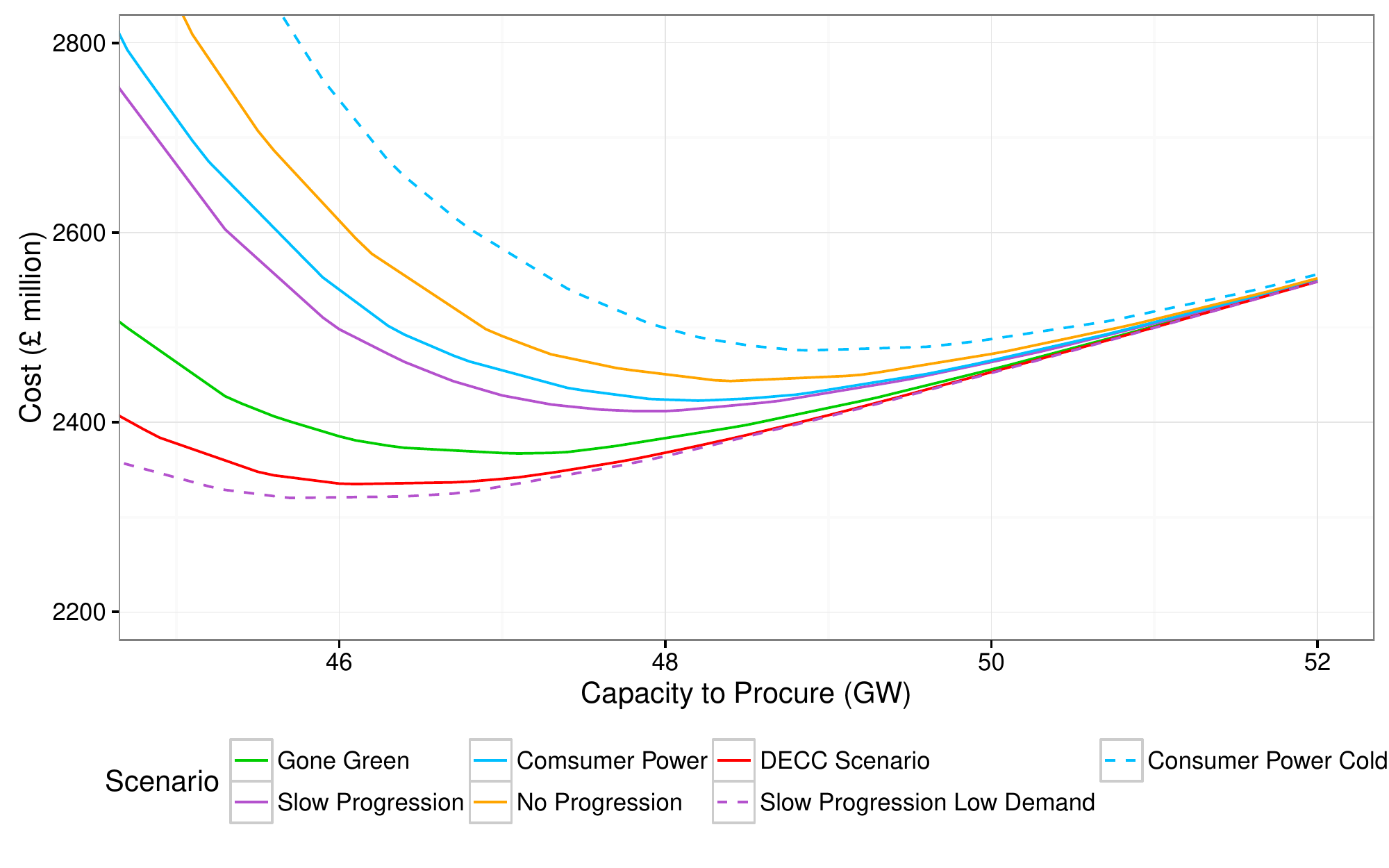}
  \caption{Combined cost of energy unserved and procured capacity,
    i.e.\ the cost functions~$f_i(x)$, against \ctp~$x$ for each of
    the five major scenarios and two variant ``sensitivities''
    illustrated in Figure 14 of the 2015 ECR.}
  \label{fig:ecr_figure_14}
\end{figure}


Figure~\ref{fig:ECR_regret_functions} plots the regret functions
$\hat{f}_i(x)$ against \emph{\ctp}~$x$ for the same seven scenarios
illustrated in Figure~\ref{fig:ecr_figure_14}.  It is now seen that
the two scenarios considered as ``sensitivities'', plotted as dashed
lines, and labelled Slow Progression Low Demand and Consumer Power
Cold, are such that the relation~\eqref{eq:5} is satisfied with $k$
and $l$ indexing these two scenarios, and where the set~$\S$ is here
taken to be the set of seven scenarios illustrated.  That this is so
is unsurprising insofar as the seven regret functions~$\hat{f}_i$,
$i\in\S$, illustrated are clearly---to a good approximation---related
to each other by argument shifts, as described in
Section~\ref{sec:linear-shifts}, and the two ``sensitivities''
identified above are clearly extreme in the sense discussed there.

\begin{figure}[!ht]
  \centering
  \includegraphics[scale=0.7]{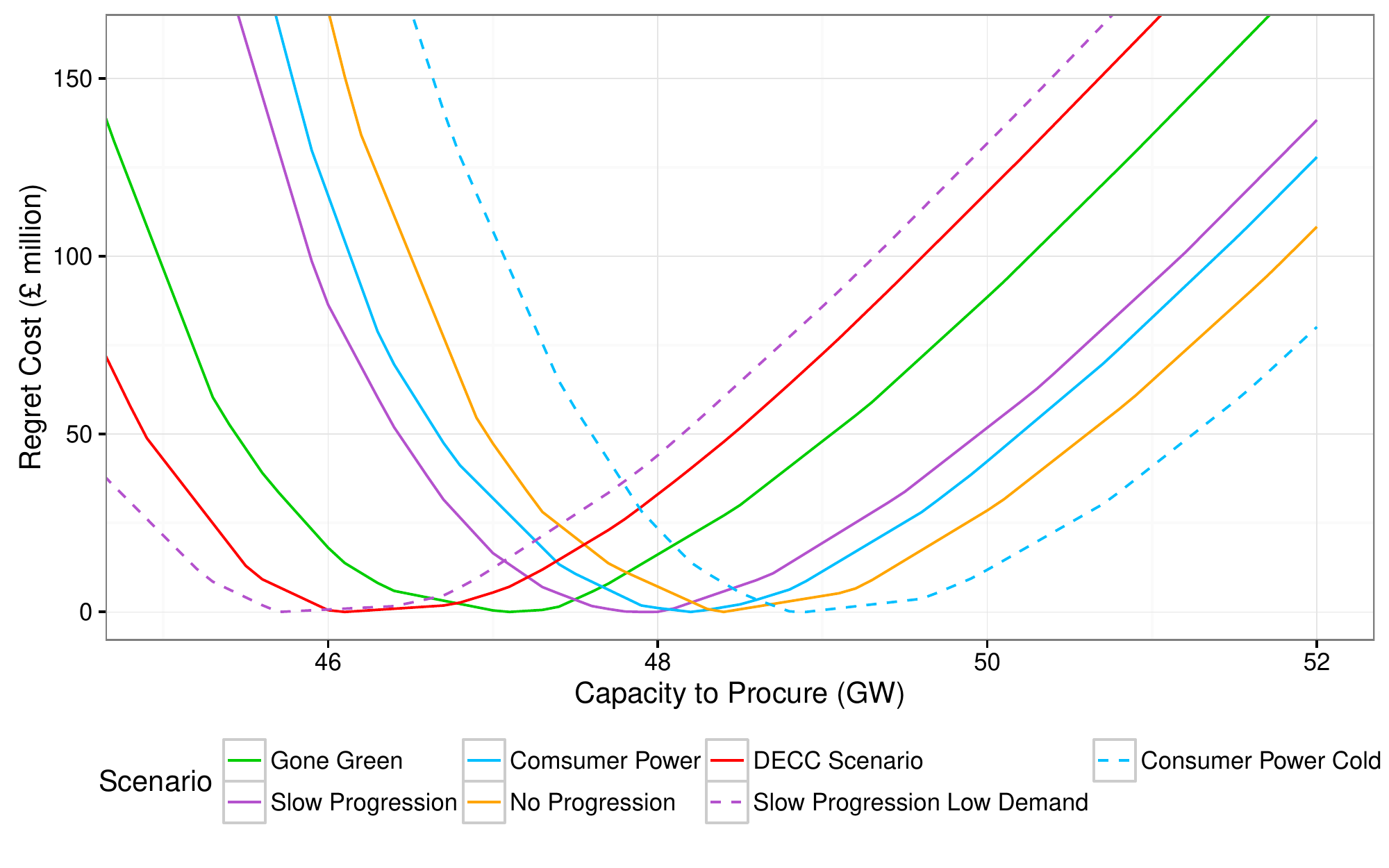}
  \caption{Regret functions~$\hat{f}_i(x)$, against \ctp~$x$ for each
    of the five major scenarios and two variant ``sensitivities''
    illustrated in Figure 14 of the 2015 ECR.}
  \label{fig:ECR_regret_functions}
\end{figure}

We thus have $\lwr{x}_\S=\lwr{x}_{kl}=47.8$ GW---this value
corresponding to the point of intersection of the regret functions for
the above two ``sensitivities''.  All this continues to be the case
when the set~$\S$ is taken to consist of all 19 scenarios considered
by the 2015 ECR; that the relation~\eqref{eq:5} then continues to hold
with $k$ and $l$ indexing the same two ``sensitivities'' identified
above (i.e.\ Slow Progression Low Demand and Consumer Power Cold)
would be immediately apparent if, for example, all the 19 regret
functions~$\hat{f}_i$, $i\in\S$, were similarly plotted.  Thus, for
these 19 scenarios, the LWR \ctp\ is $47.8$ GW.  Because the analysis
of the 2015 ECR restricts the decision set~$\D$ as described earlier,
the \ctp\ recommended by that report is $47.9$ GW.  This is the \ctp\
which precisely meets the reliability standard for one of the other
(more minor) ``sensitivities'' considered in the report.  However, a
difference of $0.1$ GW is quite negligible in relation the possible
impact of many of the other uncertainties involved, for example the
true values of all the data underlying the analysis, or the values of
the parameters~\voll\ and \cone.

The lesson to be learned from the above analysis is that it is the two
``extreme'' scenarios or ``sensitivities'', as identified above, which
largely determine the LWR solution~$\lwr{x}_\S$.  This solution is
indifferent to the remaining scenarios so long as they do not
themselves become more extreme than either of the above two.  However,
the ``extreme'' scenarios are often, as here, relatively minor ones.
Considerable care nevertheless needs to be taken in their definition,
precisely because it is they that are critical in determining the
result of the analysis.

\paragraph{Alternative probabilistic analysis.}

In line with the alternative, Bayesian, approach outlined in
Section~\ref{sec:altern-prob-analys}, we consider briefly what
assignment of probabilities to scenarios would, on minimisation of the
expression given by~\eqref{eq:11} so as to determine the
solution~$x_{\mathrm{bayes}}$ of a Bayesian analysis, result in the
same value of the \ctp\ as that given by the solution~$\lwr{x}_\S$ of
the LWR analysis.  Since the cost functions~$f_i$ may here reasonably
be treated as differentiable, in addition to being convex, it follows
from~\eqref{eq:11} that the sets of such probabilities $p_i$,
$i\in\S$, are given by the solution of
\begin{equation}
  \label{eq:23}
  \sum_{i\in\S}p_if'_i(\lwr{x}_\S) = 0.
\end{equation}
If the set $\S$ is taken to consist of all 19 scenarios of the 2015
ECR, then the sets of probabilities such that \eqref{eq:23} holds form
a 17-dimensional region.  If we restrict attention to the two
``extreme'' sensitivities Slow Progression Low Demand and Consumer
Power Cold which determine the LWR solution~$\lwr{x}_\S$ in the sense
we have discussed in this paper, then some numerical experimentation
shows that the assignment of a probability $0.68$ to the first of
these and $0.32$ to the second (with a probability $0$ assigned to all
the remaining scenarios) gives $x_{\mathrm{bayes}}=\lwr{x}_\S=47.8$ GW
as required.  That these probabilities are approximately correct may
also be seen (using~\eqref{eq:23}) from
Figure~\ref{fig:ECR_regret_functions}.

A uniform assignment of probabilities~$0.2$ to each of the five major
scenarios considered by the 2015 ECR, and whose cost and regret
functions are plotted in Figures~\ref{fig:ecr_figure_14} and
\ref{fig:ECR_regret_functions}, results in a \emph{capacity to
  procure} of $x_{\mathrm{bayes}}=47.7$ GW.  The exclusion of the DECC
scenario (which, as previously explained, was not included in the
final analysis of the report) increases this to
$x_{\mathrm{bayes}}=48.0$ GW.  A uniform assignment of probabilities
to all 19 scenarios and ``sensitivities'' considered in the report
again results in a value $x_{\mathrm{bayes}}=48.0$ GW (and $48.1$ GW
if the DECC scenario is excluded).  As it happens, none of these
figures is here significantly different from the result of the LWR
analysis.  The latter is therefore reasonably robust, in this
particular case, across a range of plausible assignments of
probabilities to scenarios.  That this is so is essentially a
combination of the fact that the regret functions are close to being
obtained from each other by simple shifts of their arguments as
described in Section~\ref{sec:linear-shifts} and of the fact that the
scenarios are relatively evenly spaced between their extremes.

\section{Conclusion}
\label{sec:conclusion}

Minimax and LWR analysis are commonly used for decision making
whenever it is difficult, or perhaps inappropriate, to attach
probabilities to possible future scenarios.  We have shown that, for
each of these two approaches and subject only to the convexity of the
cost functions involved, it is always the case that there exist two
``extreme'' scenarios whose costs determine the outcome of the
analysis in the sense we have made precise in
Section~\ref{sec:least-worst-regret}.  (The ``extreme'' scenarios need
not be the same for each of the two approaches.)  The results of
either analysis are therefore particularly sensitive to the cost
functions associated with the corresponding two ``extreme'' scenarios
(while being largely unaffected by those associated with the
remainder).  In effect the results are sensitive to the
\emph{definitions} of these two scenarios, and indeed to whether or
not they are even included in the specification of the problem.  Great
care is therefore required in applications to identify these scenarios
and to consider their reasonableness.

We have also considered the common situation in which, at least to a
good approximation, the regret functions differ from each other
essentially by shifts of their arguments.  In this case the two
``extreme'' scenarios are the obvious ones, namely those whose regret
functions have the greatest relative shift in their arguments.  It is
further possible here to specify those sets of probabilities which, if
assigned to scenarios under an alternative Bayesian analysis, would
produce the answers as given by a LWR analysis in particular.  At a
minimum this assists in assessing the reasonableness of scenarios for
inclusion in a LWR analysis.

A particular example of the above situation may occur in the case of
determining a appropriate level of investment in order to control a
risk.  Here the cost functions associated with possible future
scenarios are often the sum of an exponentially decaying and a
linearly increasing component, and further the exponential decay rates
are often approximately the same for all scenarios.  This in the case
with the problem of determining an appropriate level of electricity
capacity procurement in Great Britain, where decisions must be made
several years in advance, in spite of considerable uncertainty as to
which of a number of future scenarios may occur, and where LWR
analysis is currently used as the basis of decision making.  We have
considered in detail the analysis of the 2015 Electricity Capacity
Report submitted to the UK Government by National Grid plc.  Here the
``extreme'' scenarios determining the result of the LWR analysis are
readily identified.  \emph{Given these}, the outcome of the LWR
analysis appears reasonably robust against a variety of assignments of
probabilities to the set of all the scenarios used.

\section*{Acknowledgements}
\label{sec:acknowledgements}

The author is grateful to National Grid plc for permission to use the
data upon which the analysis of Section~\ref{sec:exampl-electr-capac}
is based.  He is also grateful for the comments of a number of
colleagues, in particular Duncan Rimmer of National Grid and Chris
Dent and Amy Wilson of Durham University.

\end{document}